\newtheorem{thm}{Theorem}[section]
\newtheorem{lem}[thm]{Lemma}
\newtheorem{prop}[thm]{Proposition}
\theoremstyle{definition}
\numberwithin{equation}{section}
\theoremstyle{remark}
\newcommand{\qr}{r}
\newcommand{\bbc}{{\mathbb C}}
\newcommand{\bbq}{{\mathbb Q}}
\newcommand{\bbz}{{\mathbb Z}}
\newcommand{\calO}{{\mathcal O}}
\newcommand{\Disc}{{\operatorname{Disc}}}
\newcommand{\gl}{{\operatorname{GL}}}
\newcommand{\textmod}{{\text {\rm mod}}}
\newcommand{\Hom}{{\operatorname{Hom}}}
\newcommand{\Z}{\bbz}
\newcommand{\Q}{\bbq}
\newcommand{\C}{\bbc}
\title[$S_3$-sextic fields]
{An error estimate for counting $S_3$-sextic number fields}
\author[Takashi Taniguchi]
{Takashi Taniguchi}
\address{
Department of Mathematics,
Graduate School of Science, Kobe University,
1-1, Rokkodai, Nada-ku, Kobe 657-8501, Japan}
\address{
Department of Mathematics, Princeton University,
Fine Hall, Washington Road, Princeton, NJ 08540}
\email{tani@math.kobe-u.ac.jp}
\author[Frank Thorne]
{Frank Thorne}
\address{Department of Mathematics,
University of South Carolina,
1523 Greene Street, Columbia, SC 29208
}
\email{thorne@math.sc.edu}  
\date{\today}
\begin{document}
\maketitle

\begin{abstract}
In this note, we prove a power-saving remainder term for
the function counting $S_3$-sextic number fields.
We also give a prediction on the second main term.

In addition, we present numerical data on counting functions for $S_3$-sextic number fields. 
The data indicates
that our prediction is likely to be correct, and it also suggests the existence of additional lower order
terms which we have not yet been able to explain.
\end{abstract}

\section{Statement}\label{sec:statement}
We call a sextic number field $\widetilde K$ {\em $S_3$-sextic}
if $\widetilde K$ is Galois over $\Q$ with $\textnormal{Gal}(\widetilde K/\Q)$ isomorphic to the symmetric group $S_3$.
Let $N_6^\pm(X;S_3)$ be the number of
$S_3$-sextic fields $\widetilde K$ with $0<\pm\Disc(\widetilde K)<X$.
The primary term of $N_6^\pm(X;S_3)$ was obtained in independent works of
Belabas-Fouvry \cite{befo} and Bhargava-Wood \cite{bhwo},
and in this article we prove the following power-saving remainder term.
\begin{thm}\label{thm:maintheorem}
We have
\begin{equation}\label{eq:weak}
N_6^\pm(X;S_3)=\frac{C^\pm}{12}\prod_p c_p\cdot X^{1/3}+O(X^{1/3-5/447+\epsilon}),
\end{equation}
where $C^+=1,C^-=3$, the product is over all primes, and
\begin{equation*}
c_p=\begin{cases}
(1-p^{-1})(1+p^{-1}+p^{-4/3}) & p\neq3,\\
(1-\frac13)(\frac43+\frac1{3^{5/3}}+\frac2{3^{7/3}})
& p=3.
\end{cases}
\end{equation*}
\end{thm}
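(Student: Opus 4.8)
The plan is to pass from sextic fields to cubic fields and then to extract the asymptotics from a Shintani-type zeta function.

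First I would use the bijection between $S_3$-sextic fields $\widetilde K$ and isomorphism classes of non-cyclic cubic fields $K$, sending $\widetilde K$ to any of its cubic subfields. Writing $F=\Q(\sqrt{\Disc K})$ for the quadratic resolvent and applying the conductor--discriminant formula to $\gal(\widetilde K/\Q)=S_3$ (whose irreducible characters have conductors $1$, $|\Disc F|$, and $|\Disc K|$, the last entering with multiplicity $2$) gives $|\Disc\widetilde K|=|\Disc K|^{2}\,|\Disc F|$; since $\Disc K=\Disc F\cdot f^{2}$ for a conductor $f$, this is $|\Disc\widetilde K|=|\Disc F|^{3}f^{4}$. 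Moreover $\widetilde K$ is totally real exactly when $F$ is real, so the sign of $\Disc\widetilde K$ equals that of $\Disc K$; thus $N_6^{+}$ counts totally real cubic fields and $N_6^{-}$ counts complex cubic fields, both now ordered by $|\Disc F|^{3}f^{4}$ rather than by $|\Disc K|$.

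Next I would study the Dirichlet series $\Phi^{\pm}(s)=\sum_{K}|\Disc\widetilde K|^{-s}=\sum_{K}|\Disc K|^{-2s}|\Disc F|^{-s}$, the sum being over cubic fields of the appropriate signature. Using the Delone--Faddeev/Davenport--Heilbronn parametrization together with the maximality sieve, the independence of ramification conditions at distinct primes yields an Euler product for $\Phi^{\pm}$: the local factor at $p$ collects the contributions of the splitting types of $p$ in $K$ -- unramified, partially ramified (contributing to $|\Disc F|$), and totally ramified (contributing to $f$) -- each weighted by the corresponding power of $p$ dividing $|\Disc\widetilde K|$. Evaluated at the edge $s=1/3$, where $p^{-3s}=p^{-1}$ and $p^{-4s}=p^{-4/3}$, this reproduces the $c_p$ of the theorem and explains the exponents $5/3,7/3$ in the wildly ramified factor at $p=3$. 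Identifying $\Phi^{\pm}$ with a modification of the Shintani zeta function of binary cubic forms supplies its meromorphic continuation, whose rightmost pole lies at $s=1/3$ -- the image of the pole at $1$ of the cubic-field zeta function, dominated by squarefree-discriminant fields with $|\Disc\widetilde K|=|\Disc F|^{3}$. The residue there produces the main term $\tfrac{C^{\pm}}{12}\prod_p c_p$, with the ratio $C^{-}:C^{+}=3:1$ and the factor $\tfrac{1}{12}$ inherited from the Davenport--Heilbronn densities of complex and totally real cubic fields, recovering the constant of Belabas--Fouvry and Bhargava--Wood; the predicted second main term should correspond to the secondary pole of the Shintani zeta function at $5/6$, which I expect to appear at $s=5/18$.

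Finally I would recover $N_6^{\pm}(X)$ by Perron's formula, shifting the contour past $s=1/3$ to capture the residue and bounding the shifted integral through the functional equation and Phragm\'en--Lindel\"of growth estimates for the Shintani zeta function in vertical strips. The coupling to $|\Disc F|$ is the source of the difficulty: I would decompose $\Phi^{\pm}(s)=\sum_{f}f^{-4s}\Phi_f^{\pm}(s)$, where $\Phi_f^{\pm}$ is a Shintani zeta function with prescribed local conditions at the primes dividing $f$, truncate the conductor at $f\le F_0$, and balance the tail of the $f$-sum against the analytic error, which degrades as $f$ grows. \textbf{The main obstacle} is to keep the cubic-field count uniform in the conductor -- equivalently, to bound with a power saving the number of cubic fields whose discriminant is divisible by a large square -- using the explicit vertical-strip estimates for the Shintani zeta function from our earlier work; optimizing the truncation $F_0$ against these two competing error contributions is what produces the exponent $X^{1/3-5/447+\epsilon}$, which exceeds the expected secondary exponent $5/18$ and so permits only a prediction, not a proof, of the second main term.
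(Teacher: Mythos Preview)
Your plan is essentially the paper's. The paper also passes to cubic fields via $\Disc(\widetilde K)=\Disc(K)^2\Disc(F)$, decomposes the count according to the product $r$ of primes at which $K$ is totally ramified (this is your conductor $f$, at least away from $3$), writes
\[
N_6^\pm(X;A)=\sum_{3\nmid r}N_3^\pm\bigl(r^{2/3}m_A^{1/3}X^{1/3};A,r\bigr)
\]
after first fixing the \'etale cubic $\Q_3$-algebra $A=K\otimes\Q_3$, truncates the sum at $r<Q$, and feeds in a cubic-field asymptotic uniform in $r$ (their Theorem~\ref{thm:exponent}). That uniform input is obtained exactly as you propose in your last paragraph: Shintani zeta functions carrying the local conditions at the primes dividing $r$, the functional equation, and vertical-strip bounds controlled by the Fourier transforms of the indicator functions of ``totally ramified at $p$''. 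Optimizing $Q$ against the two competing errors produces $X^{1/3-5/447+\epsilon}$.

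Two points to fix. First, the Dirichlet series $\Phi^\pm(s)=\sum_K|\Disc\widetilde K|^{-s}$ does \emph{not} have an Euler product; independence of local splitting types yields a product formula only for the \emph{residues} at $s=1/3$ and $s=5/18$, which is indeed how the constants $c_p$ and $k_p$ arise, but the series itself is a Shintani-type object with no multiplicative structure. Your actual analytic argument (decompose over $f$, use Shintani zeta functions with local conditions) does not rely on this claim, so simply delete it. Second, the paper isolates the wild prime by summing over the finitely many $A$ and introducing the correction $m_A=\Disc_3(A)^3/\Disc_3(\widetilde A)$; in your framework the $3$-part of $f$ is not determined by a single ``totally ramified'' condition and $f$ need not be squarefree there, so when you define $\Phi_f^\pm$ you must specify the $3$-adic local condition separately rather than folding it into the squarefree sieve over $f$.
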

Moreover, under a natural but rather strong {\em assumption}
of uniformity estimates for certain counting functions of cubic fields,
we obtain
\begin{equation}\label{eq:strong}
N_6^\pm(X;S_3)
=\frac{C^\pm}{12}\prod_p c_p\cdot X^{1/3}
+\frac{4K^\pm\zeta(1/3)}{5\Gamma(2/3)^3}\prod_p k_p\cdot X^{5/18}
+o(X^{5/18}),
\end{equation}
where $K^+=1, K^-=\sqrt3$ and
\begin{equation*}
k_p=\begin{cases}
1+\frac1{p^{13/9}(1+p^{-1})}
\left(1-\frac1{p^{2/9}}-\frac1{p^{5/9}}-\frac1{p^{2/3}}\right) & p\neq3,\\
\frac14\left(\frac{11}3-\frac1{3^{2/3}}+\frac1{3^{8/9}}
	+\frac2{3^{13/9}}-\frac1{3^{14/9}}-\frac2{3^{19/9}}\right) & p=3.
\end{cases}
\end{equation*}
As in \cite{befo} and \cite{bhwo}, we relate counting $S_3$-sextic fields
to counting non-cyclic cubic fields with certain local completions.
These cubic fields may then be counted using our
previous work \cite{scc}. We may obtain a power saving error term simply by
quoting our previous results, but we improve on this
by applying the methods used in the proofs in \cite{scc}. This amounts to computing the Fourier transform
of a function related to these local completions, and this computation was essentially carried out in \cite{lbc}. 

We also present numerical data for $N_6^{\pm}(X; S_3)$ for $X \leq 10^{23}$, computed by Cohen and the second author in \cite{CT3},
and verified independently by the present authors for 
$X \leq 5 \cdot 10^{18}$ by a second method. Interestingly, our computations
suggest that \eqref{eq:strong} is likely to be correct, but with additional lower order terms
which we were not able to explain. Our data also suggests the existence of 
surprising biases in arithmetic progressions, for example modulo $5$, which cannot be explained
by any heuristic of which we are aware.

\section*{Acknowledgments}
We are grateful to Karim Belabas, Simon Rubinstein-Salzedo, and an anonymous referee for useful comments.

Thorne's work is partially supported by the National Science Foundation under Grant No. DMS-1201330.

\section{Proof}
For a non-cyclic cubic field $K$, let $\widetilde K$ denote its Galois closure.
Then the map $K\mapsto\widetilde K$ gives a canonical bijection between the set of
isomorphism classes of non-cyclic cubic fields and the set of
isomorphism classes of $S_3$-sextic fields.
Let us compare $\Disc(K)$ and $\Disc(\widetilde K)$.
They have the same sign, and if we write
\[
\Disc(K)=\pm\prod p^{e_p(K)},
\qquad
\Disc(\widetilde K)=\pm\prod p^{e_p(\widetilde K)},
\]
we have the following.
\begin{lem}\label{lem:relation_disc}
\begin{enumerate}[{\rm (1)}]
\item
If $K$ is not totally ramified at $p$, then $e_p(\widetilde K)=3e_p(K)$.
\item
If $K$ is totally ramified at $p$ and $p\neq3$, then
$e_p(\widetilde K)=2e_p(K)=4$.
\item
If $K$ is totally ramified at $p=3$,
then $e_p(\widetilde K)=7,8$ or $11$ according as $e_p(K)=3,4$ or $5$.
\end{enumerate}
\end{lem}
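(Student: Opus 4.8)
The plan is to reduce everything to the conductor--discriminant formula for the Galois closure $\widetilde K/\Q$, whose Galois group $S_3$ has three irreducible characters: the trivial character $\mathbf 1$, the sign character $\epsilon$ (cut out by the quadratic resolvent $F$), and the standard two--dimensional character $\rho$. Since the permutation character of $S_3$ on the cosets of a transposition subgroup is $\mathbf 1\oplus\rho$, while the regular character is $\mathbf 1\oplus\epsilon\oplus\rho^{\oplus2}$, the formula gives $e_p(K)=a_p(\rho)$ and $e_p(\widetilde K)=a_p(\epsilon)+2a_p(\rho)$, where $a_p(\cdot)$ denotes the exponent of $p$ in the Artin conductor. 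Thus $e_p(\widetilde K)=a_p(\epsilon)+2e_p(K)$ (equivalently $\Disc(\widetilde K)=\Disc(F)\Disc(K)^2$), and the lemma becomes the computation of $a_p(\epsilon)=v_p(\Disc F)$ together with its relation to $e_p(K)$ in each ramification type. As $a_p(\cdot)$ depends only on the inertia group $I_p\subseteq S_3$ and its higher--ramification filtration, I would argue case by case according to $I_p$.

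For the non--totally--ramified statements this is immediate. If $p$ is unramified in $K$ then $I_p=1$ and both sides vanish. If $p$ is partially ramified then $I_p$ is a transposition subgroup; there $\epsilon|_{I_p}$ is the nontrivial character while $\rho|_{I_p}=\mathbf 1\oplus\epsilon|_{I_p}$, and since the trivial summand never contributes to a conductor, $\epsilon$ and $\rho$ carry identical ramification data, so $a_p(\epsilon)=a_p(\rho)=e_p(K)$ and $e_p(\widetilde K)=3e_p(K)$. This argument is insensitive to wildness, so it also covers $p=2$, proving (1). For $p\neq3$ totally ramified, $e=3$ is prime to $p$, the ramification is tame, and $I_p=A_3$; then $\epsilon|_{A_3}$ is trivial, giving $a_p(\epsilon)=0$, while $\rho|_{A_3}$ has no invariants, giving $a_p(\rho)=2$. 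Hence $e_p(K)=2$ and $e_p(\widetilde K)=2e_p(K)=4$, which is (2).

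The remaining case $p=3$ totally ramified is where wild ramification enters and is the crux; here $I_3\in\{A_3,S_3\}$. If $I_3=A_3$, the local Galois closure $\widetilde K_{\mathfrak P}/\Q_3$ is cyclic cubic (indeed $K_{\mathfrak p}=\widetilde K_{\mathfrak P}$) and $3$ is unramified in $F$, so $a_3(\epsilon)=0$; a direct inspection shows every ramified cubic character of $\Q_3^\times$ has conductor exponent $2$, so every ramified cyclic cubic extension of $\Q_3$ has discriminant exponent $4$. Thus $e_3(K)=4$ and $e_3(\widetilde K)=8$. If instead $I_3=S_3$, then $F$ is tamely ramified at $3$, so $a_3(\epsilon)=1$ and $e_3(\widetilde K)=2e_3(K)+1$. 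Writing the cyclic cubic extension $\widetilde K/F$ by a cubic character $\psi$ of $F_v^\times$ and applying the conductor--discriminant formula for induction, I obtain $e_3(K)=a_3(\rho)=1+a_{F_v}(\psi)$, so everything reduces to the single ramification break $t$ of $\psi$, for which $a_{F_v}(\psi)=t+1$ and hence $e_3(K)=t+2$.

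The one subtle point is to pin down $t$, and this is the part I expect to require the most care. The condition that $\gal(\widetilde K/\Q)$ be $S_3$ rather than $\Z/6$ is precisely that $\psi$ be anti--invariant under $\gal(F_v/\Q_3)$, equivalently that $\psi$ be trivial on $\Q_3^\times\subseteq F_v^\times$. Because $F_v/\Q_3$ is tamely ramified of degree $2$, the subgroup $U_{\Q_3}^{(j)}$ surjects onto every even graded piece $U_{F_v}^{(2j)}/U_{F_v}^{(2j+1)}\cong\bbf_3$; triviality of $\psi$ on $\Q_3^\times$ therefore forces $\psi$ to be trivial on each even graded piece, so its break $t$ must be \emph{odd}. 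Combined with the standard bound $t\le \tfrac{3}{3-1}\,v_{F_v}(3)=3$ for a degree--$3$ character, this gives $t\in\{1,3\}$, hence $e_3(K)=t+2\in\{3,5\}$ and $e_3(\widetilde K)=2e_3(K)+1\in\{7,11\}$. The even break $t=2$ would have produced the spurious pair $e_3(K)=4$, $e_3(\widetilde K)=9$; ruling it out by this parity obstruction is exactly where the prime $p=3$ and the tameness of the quadratic resolvent are essential. Collecting the three cases then yields the stated table.
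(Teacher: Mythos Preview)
Your proof is correct; both you and the paper start from the same identity $\Disc(\widetilde K)=\Disc(K)^2\Disc(F)$ (equivalently $e_p(\widetilde K)=a_p(\epsilon)+2e_p(K)$), but the routes diverge from there. The paper simply observes that $F=\Q(\sqrt{\Disc(K)})$, so for every odd prime $p$ one has $e_p(F)\in\{0,1\}$ according to the parity of $e_p(K)$; this immediately gives (2) and (3) without any higher ramification analysis, and the paper then handles $p=2$ by a short direct argument on the splitting of $(2)$ in $\widetilde K$. Your approach instead computes $a_p(\epsilon)$ by identifying the inertia group $I_p\subseteq S_3$ and, in the wild case $p=3$, carries out a local class field theory computation of the break of the cubic character $\psi$ of $F_v^\times$. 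This is more machinery than the lemma requires, but it buys you two things the paper's proof does not: a uniform treatment of $p=2$ within case~(1), and an intrinsic derivation of the fact that $e_3(K)\in\{3,4,5\}$ (with $4$ exactly in the locally cyclic case), which the paper's proof takes as input. The parity obstruction you isolate---that $\psi|_{\Q_3^\times}=1$ forces the break $t$ to be odd, ruling out $t=2$---is a genuinely nice point, though for the lemma as stated one only needs the map $e_3(K)\mapsto e_3(\widetilde K)$, for which the paper's parity-of-exponent argument is the shorter path.
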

\begin{proof}
Equivalent statements appear in \cite{befo} and \cite{bhwo}, and we give a proof for the convenience of the reader.
Let $F = \Q(\sqrt{\Disc(K)})$ be the quadratic resolvent field of $K$ (equivalently, the unique
quadratic subfield of $\widetilde{K}$).
We have the classical formula (see, e.g. Theorem 2.5.1 and Lemma 10.1.27 of \cite{coh})
\begin{equation}
\Disc(\widetilde{K}) = \Disc(K)^2 \Disc(F).
\end{equation}
Therefore, for $p > 2$, $e_p(\widetilde{K})$ is equal to $2 e_p(K) + a$, where $a = 0$ or $1$
depending on whether $e_p(K)$ is even or odd. 

For $p = 2$, observe that $2$ can ramify in $F$ only if it ramifies in $K$. If $(2) = \mathfrak{p}_1^2 \mathfrak{p}_2$
in $K$, then in ${\widetilde{K}}$, $(2)$ must split into three ideals with ramification index 2. Therefore $2$ must
ramify in $F$ with $e_2(F) = e_2(K)$ so that $e_2(\widetilde{K}) = 3 e_2(K)$. 
If $(2) = \mathfrak{p}^3$ in $K$, then $2$ is tamely ramified in $K$, and therefore $\widetilde{K}$,
so that $(2)$ splits into two ideals of ramification index 3 in $\widetilde{K}$. This implies that $(2)$ is unramified in
$F$, so that $e_2(\widetilde{K}) = 2 e_2(K)$.

\end{proof}

In particular,
$e_p(K)$ determines $e_p(\widetilde K)$ uniquely
except for the case $p=2$ and $e_2(K)=2$, 
while in this case $e_2(\widetilde K)$ is either $6$ or $4$
according as $K$ is partially or totally ramified at $2$.

Let us briefly explain our approach. If we ignore the
ramification over the prime $3$, then Lemma \ref{lem:relation_disc}
implies
\begin{equation}\label{eq:relation_disc_fake}
\Disc(\widetilde K) =^* \qr^{-2}\Disc(K)^3,
\end{equation}
where $\qr$ is the product
of all primes where $K$ is totally ramified.\footnote{We have starred two equalities which are not actually true as stated.
We correct them in \eqref{eq:relation_disc} and \eqref{eq:relation_count} respectively.}
Denoting by $N_3^\pm(X;\qr)$ the number
of non-cyclic cubic fields $K$ with $\qr$ as above
and $0<\pm\Disc(K)<X$, then
\begin{equation}\label{eq:relation_count_fake}
N_6^\pm(X;S_3) =^* \sum_\qr N_3^\pm(\qr^{2/3}X^{1/3};\qr).
\end{equation}
Here the sum is over all square-free integers $\qr$.
However, \eqref{eq:relation_count_fake} may not be true
because of the ramification at $3$,
so we specify the completion $A$ of $K$ at $3$ and
count for each $A$.

Let $A$ denote an \'{e}tale cubic algebra over $\Q_3$
(i.e., a direct product of field extensions of $\Q_3$ whose degrees add to 3)
and $\qr$ a square-free integer coprime to $3$.
Let $\mathcal K_3(A,\qr)$ be the set of non-cyclic cubic fields $K$
satisfying (i) $K\otimes_\Q\Q_3\cong A$,
(ii) $K$ is totally ramified at all prime divisors of $\qr$, and
(iii) $K$ is not totally ramified at any prime $p \nmid 3r$.
Let $\widetilde A$ be the sextic
algebra over $\Q_3$ isomorphic to $\widetilde K\otimes_\Q\Q_3$ for 
$K\in\mathcal K_3(A,\qr)$, which does not depend on $K$.
Let $\Disc_3(A)$ and $\Disc_3(\widetilde A)$ be the $3$-parts of their
discriminants; e.g., write $\Disc(A) = u \Disc_3(A)$, where $u$ is a $3$-adic unit\footnote{Observe that $\Disc(A)$ and $u$
are only defined up to squares of $3$-adic units, but $\Disc_3(A)$ is well defined.}, and similarly
for $\Disc(\widetilde A)$.
Then for $K\in\mathcal K_3(A,\qr)$,
instead of \eqref{eq:relation_disc_fake} we have
\begin{equation}\label{eq:relation_disc}
\Disc(\widetilde K)=\qr^{-2} m_A^{-1} {\Disc_3(A)^3}\Disc(K)^3,
\end{equation}
with
$m_A:=\Disc_3(A)^3/\Disc_3(\widetilde A)$.
Let
$N_3^\pm(X;A,\qr)$
denote the number of $K\in\mathcal K_3(A,\qr)$
with $0<\pm\Disc(K)<X$.
We will use a formula of the form
\begin{equation}\label{eq:cubic_density}
\begin{split}
N_3^\pm(X;A,\qr)
&=\eta_3(A)\eta(\qr)\prod_p(1-p^{-2})\frac{C^\pm}{12}X\\
&+\theta_3(A)\theta(\qr)\prod_p\left(1-\frac{p^{1/3}+1}{p(p+1)}\right)\frac{4K^\pm\zeta(1/3)}{5\Gamma(2/3)^3}X^{5/6}+O(\qr^\alpha X^\beta).
\end{split}
\end{equation}
Here $\eta_3(A)$ and $\theta_3(A)$ are ``local densities'' of $A$
computed in \cite{scc},
$\eta$ and $\theta$ are multiplicative functions satisfying
\[
\eta(p)=\frac{1}{p^2(1+p^{-1})},
\qquad
\theta(p)=\frac{1}{p^2(1+p^{-2/3}+p^{-1}+p^{-4/3})}
\]
for any prime $p$, and $\alpha,\beta$ are certain real constants.
By Theorem 1.2 in \cite{scc}, \eqref{eq:cubic_density} is true
with $\alpha=40/23, \beta=18/23+\epsilon$ and this suffices
to obtain \eqref{eq:weak} with a larger error term of
$O(X^{1/3-5/744+\epsilon})$. In this paper, we improve the
estimate as follows.
\begin{thm}\label{thm:exponent}
The formula \eqref{eq:cubic_density}
is true for $\alpha=7/23+\epsilon$, $\beta=18/23+\epsilon$.
\end{thm}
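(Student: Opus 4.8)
The plan is to count the fields in $\mathcal K_3(A,\qr)$ by the geometry-of-numbers and Fourier-analytic machinery developed in \cite{scc}, imposing the three defining local conditions as $\gl_2(\Z_p)$-invariant weighting functions on integral binary cubic forms. Recall that non-cyclic cubic fields are parametrized by $\gl_2(\Z)$-classes of irreducible integral binary cubic forms (after imposing maximality), with the field discriminant equal to the form discriminant. The conditions $K\otimes_\Q\Q_3\cong A$, total ramification at each $p\mid\qr$, and non-total-ramification away from $3\qr$ are all local, so each is captured by the characteristic function $\Phi_p$ of a $\gl_2(\Z_p)$-invariant subset of the space $V(\Z_p)$ of $p$-adic binary cubic forms. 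First I would assemble $N_3^\pm(X;A,\qr)$ as a weighted lattice-point count over the fundamental domain with weight $\prod_p\Phi_p$, reducing the problem to estimating this count uniformly in the level, which here is essentially $\qr^2$: by part (2) of Lemma \ref{lem:relation_disc} total ramification at $p\neq3$ forces $e_p(K)=2$, so it is a congruence condition modulo $p^2$.

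The heart of the argument, following \cite{scc}, is the splitting of the count into a contribution from the main body of the fundamental domain and a contribution from the cusp. The main-body term is governed by the $p$-adic volumes of the sets cut out by the $\Phi_p$, which factor as an Euler product; this produces the leading $X$-term with density $\eta_3(A)\eta(\qr)\prod_p(1-p^{-2})$, the local factor at $p\mid\qr$ evaluating to $\eta(p)=p^{-2}(1+p^{-1})^{-1}$ as claimed. The cusp contribution is treated by Poisson summation along the cusp, exactly as in \cite{scc}: its polar part yields the secondary main term of order $X^{5/6}$ carrying $\theta_3(A)\theta(\qr)$ and the constant $\tfrac{4K^\pm\zeta(1/3)}{5\Gamma(2/3)^3}$, while the remaining dual sum is the source of the error term. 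The decisive point is that the dependence of this dual sum on the level $\qr$ is controlled by the Fourier transforms $\widehat{\Phi_p}$ of the totally-ramified indicator functions at the primes $p\mid\qr$.

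To obtain the improved exponent $\alpha=7/23+\epsilon$ — rather than the $\alpha=40/23$ one gets by quoting Theorem 1.2 of \cite{scc} as a black box — I would compute these $\widehat{\Phi_p}$ explicitly and bound their support and size, instead of estimating the level-$\qr^2$ contribution trivially. This is precisely the Fourier transform of the characteristic function of the set of $p$-adic binary cubic forms defining a totally ramified cubic algebra, a computation essentially carried out in \cite{lbc}. Feeding the explicit transform into the dual sum shows that the genuine $\qr$-aspect of the error is $\qr^{7/23}$ rather than $\qr^{40/23}$, while the $X$-aspect is unchanged at $X^{18/23+\epsilon}$; combining the two gives the stated $O(\qr^{7/23+\epsilon}X^{18/23+\epsilon})$.

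The main obstacle I anticipate is exactly this uniform control of the error in the level. One must (i) verify that the explicit single-prime Fourier transforms from \cite{lbc} assemble multiplicatively over $p\mid\qr$ with a clean bound on the combined dual sum, and (ii) ensure the saving is genuinely uniform — that no factor of $\qr$ is lost when the level-$\qr^2$ congruence interacts with the cusp geometry and with the separate condition at $3$ that fixes $A$. Keeping the $X$- and $\qr$-aspects of the error cleanly separated, so that the same $\beta=18/23+\epsilon$ survives while $\alpha$ improves, is the delicate bookkeeping at the center of the proof.
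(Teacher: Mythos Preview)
Your proposal correctly isolates the decisive idea: the improvement from $\alpha=40/23$ to $\alpha=7/23$ comes from computing the Fourier transforms of the totally-ramified indicator functions explicitly (via \cite{lbc}) rather than invoking \cite{scc} as a black box. That much matches the paper.

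Two points of divergence deserve comment. First, the framework you describe --- a lattice-point count in a fundamental domain, split into main body and cusp, with Poisson summation along the cusp --- is the Bhargava-style geometry-of-numbers pipeline. The paper and \cite{scc} instead run the Shintani zeta function machinery: one forms $\xi_{r,q}^\pm(s)$, extracts main terms as residues via contour integration, applies the functional equation to pass to the dual $\widehat{\xi_{r,q}}{}^\pm(s)$, and bounds the error by controlling partial sums of the dual Dirichlet coefficients $c_{r,q}(\mu_n)$. The Fourier transforms enter through the very definition of the dual zeta function, not through Poisson on a cusp region. Either route can be pushed through, but they are genuinely different arguments, and your attribution of the main-body/cusp split to \cite{scc} is a mischaracterization.

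Second, and this is a real gap, you treat the level as $r^2$ and never address condition (iii): non-total-ramification at \emph{every} prime $p\nmid 3r$. That is an infinite set of local constraints and cannot be absorbed into a single finite-level congruence. The paper handles it by a M\"obius sieve: introduce an auxiliary squarefree $q$ coprime to $3r$, weight by $\Phi_q$ (nonmaximal-or-totally-ramified at each $p\mid q$), and work at level $27r^2q^2$. The error then takes the shape $\sum_{q<Q}E_q(r,y,X)+y^{1+\epsilon}+X/Q^{1-\epsilon}$, and the exponents $\alpha=7/23$, $\beta=18/23$ emerge only after optimizing the parameters $Q$, $y$, and $z_q$ (specifically $y=X/Q$, $Q=X^{5/23}r^{-7/23}$). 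The $7/23$ is not read off from a single-prime transform; it is the output of this three-parameter balancing against the $q$-sieve tail. Your sketch omits this entire layer, which is where the actual bookkeeping --- and the interaction of the $r$- and $X$-aspects you flag as delicate --- takes place.
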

We postpone its proof to the next section, and continue the
proof of \eqref{eq:weak} and \eqref{eq:strong}.
Let $N_6^\pm(X;A)$ be the number of $S_3$-sextic fields $\widetilde K$
such that $K\otimes_\Q\Q_3\cong A$.
Then by \eqref{eq:relation_disc},
\begin{equation}\label{eq:relation_count}
N_6^\pm(X;A)=\sum_{3\nmid \qr}N_3^\pm(\qr^{2/3}m_A^{1/3}X^{1/3};A,\qr)
\end{equation}
where the sum is over all square-free integers coprime to $3$. Therefore, our results follow
from \eqref{eq:relation_count}, \eqref{eq:cubic_density}, and a computation, the details of which follow.

We choose $Q$ and split this sum into $\qr<Q$ and $\qr\geq Q$.
By \cite[Lemma 3.4]{scc} we have the estimate
$N_3^\pm(X;A,\qr)=O(\qr^{-2+\epsilon}X)$.
Hence the latter sum is bounded by $O(Q^{-1/3+\epsilon}X^{1/3})$.
On the other hand it is easy to see that
\begin{gather*}
\begin{split}
\sum_{3\nmid \qr,\qr<Q}\eta(\qr)\qr^{2/3}
&=\prod_{p\neq3}(1+\eta(p)p^{2/3})+O({Q^{-1/3+\epsilon}}),\\
\sum_{3\nmid \qr,\qr<Q}\theta(\qr)\qr^{5/9}
&=\prod_{p\neq3}(1+\theta(p)p^{5/9})+O({Q^{-4/9+\epsilon}}).
\end{split}
\end{gather*}
We define
\begin{gather*}
c_p:=(1+\eta(p)p^{2/3})(1-p^{-2}),
\quad
k_p:=(1+\theta(p)p^{5/9})
\left(1-\frac{p^{1/3}+1}{p(p+1)}\right)
\qquad
(p\neq3),
\end{gather*}
which coincide with the constants given in Section \ref{sec:statement}.
We also put
\[
\eta_3'(A):=(1-3^{-2})\eta_3(A)m_A^{1/3},
\qquad
\theta_3'(A):=\left(1-\frac{3^{1/3}+1}{3(3+1)}\right)\theta_3(A)m_A^{5/18}.
\]
Then by \eqref{eq:relation_count} and \eqref{eq:cubic_density}, and ignoring a negligible
$O(Q^{-4/9 + \epsilon} X^{5/18})$ term, 
we have
\begin{equation}\label{eq:count_A}
\begin{split}
N_6^\pm(X;A)
=&\eta_3'(A)\prod_{p\neq3}c_p\cdot\frac{C^\pm}{12}X^{1/3}
+\theta_3'(A)\prod_{p\neq3}k_p\cdot\frac{4K^\pm\zeta(1/3)}{5\Gamma(2/3)^3}X^{5/18}\\
&+O(X^{\beta/3}\sum_{\qr<Q}\qr^{\alpha+2\beta/3})+O(Q^{-1/3+\epsilon}X^{1/3}).
\end{split}
\end{equation}
The first $O$-term is
$O(Q^{\alpha+2\beta/3+1}X^{\beta/3})$, and we choose
$Q=X^{\frac{1-\beta}{3\alpha+2\beta+4}}$ to obtain
an error
of
$O(X^{\frac13(1-\frac{1-\beta}{3\alpha+2\beta+4})+\epsilon})$
in \eqref{eq:count_A}. With our constants
$\alpha=7/23+\epsilon$ and $\beta=18/23+\epsilon$,
this is $O(X^{\frac13(1-\frac{5}{149})+\epsilon})$.
If \eqref{eq:cubic_density} is true for e.g., $\alpha=-1,\beta=1/2$,
this is $O(X^{1/4+\epsilon})$ and we would obtain the second main term.
Such an estimate might be true,
but it seems difficult to prove; moreover, our numerical data (see Section \ref{sec_data})
suggests that perhaps such a strong estimate is {\itshape not} true.

Recall that
\[
N_6^\pm(X;S_3)=\sum_A N_6^\pm(X;A)
\]
where $A$ in the right hand side
runs through all the \'{e}tale cubic algebras over $\Q_3$. (There are finitely many,
as there are finitely many field extensions of $\Q_3$ of degree $\leq 3$.)
Hence the contribution to the main term of $N_6^\pm(X;S_3)$
from the prime $3$ is given by
\[
c_3:=\sum_A\eta_3'(A)=(1-3^{-2})\sum_A\eta_3(A)m_A^{1/3}.
\]
The local density $\eta_3(A)$ is given in the tables in
Section 6.2 of \cite{scc}. Also, $m_A$ is equal to $1$, $9$, or $81$
depending on whether the $3$-adic valuation of $\Disc(A)$ is less than $3$,
equal to $3$, or greater than $3$. We therefore compute that
\begin{equation}\label{c3_comp}
c_3=(1-3^{-2}) \cdot \frac{1+ \frac{1}{3}+ \frac2{27} \cdot 3^{2/3}+\frac{1}{27} \cdot 3^{4/3}}{1 + \frac{1}{3}},
\end{equation}
which is equal to the quantity given in Section \ref{sec:statement}.
Similarly, the contribution to the secondary term is given by
\begin{equation}\label{k3_comp}
\sum_A\theta_3'(A)
=\left(1-\frac{3^{1/3}+1}{3(3+1)}\right)\sum_A\theta_3(A)m_A^{5/18},
\end{equation}
and a similar calculation yields the value of
$k_3$ given in Section \ref{sec:statement}.

%

\section{Proof of Theorem \ref{thm:exponent}}
In this section, we prove Theorem \ref{thm:exponent}
by following the arguments of \cite{lbc} and \cite{scc}. 

A brief sketch of our proof is as
follows.
In \cite{scc}, we counted cubic fields in terms of contour integrals of certain zeta functions
introduced by Shintani \cite{shintani}, associated to the space of {\itshape binary cubic forms}.
Our method is naturally compatible with
``local specifications'' such as those appearing in \eqref{eq:cubic_density}, and the error terms
of \eqref{eq:cubic_density} depend on the ``shape'' of these local specifications -- 
in particular, 
on the Fourier transforms of their indicator functions.
We establish fairly sharp 
bounds for these Fourier transforms on average, which lead to reasonably good bounds
on the error terms in \eqref{eq:cubic_density} (in $\alpha$-aspect) and therefore in Theorem 
\ref{thm:maintheorem}.

We follow the notations of \cite{lbc} and \cite{scc},
but recall the most basic ones.
Let
\begin{equation}
V(\Z):=\{x=(x_1,x_2,x_3,x_4)=x_1u^3+x_2u^2v+x_3uv^2+x_4v^3\mid x_1, x_2, x_3, x_4\in\Z\}
\end{equation}
be the lattice of integral binary cubic forms, with its
usual action of ${\rm GL}_2(\Z)$.
Then there is a discriminant preserving
bijection between ${\rm GL}_2(\Z)\backslash V(\Z)$
and the set of isomorphism classes of cubic rings.\footnote{A {\itshape cubic}
ring is a commutative ring which is free of rank 3 as a $\Z$-module.}

We now define these indicator functions.
Let $p\neq 3$ be a prime. 
We define $\Phi_p: V(\Z) \rightarrow \C$ to be the characteristic function of those $x \in V_{\Z}$
whose corresponding cubic ring is either nonmaximal at $p$, or maximal and totally ramified at $p$.
We similarly define $\Psi_p$ by requiring the cubic ring to be both maximal and totally ramified
at $p$. These two functions factor through
the reduction map $V(\Z)\rightarrow V(\Z/p^2\Z)$,
and we also write $\Phi_p$, $\Psi_p$ for these functions on $V(\Z/p^2\Z)$.

The prime $3$ demands a special treatment.
We fix an \'etale cubic algebra $A$ over $\Q_3$
throughout this section; note that since there are only finitely many $A$,
uniformity in our error terms with respect to $A$ is automatic.
Let $\Phi_A$ be the characteristic function on $V(\Z)$ or $V(\Z/27\Z)$
corresponding to cubic rings $R$
such that $R\otimes\Z_3\cong\mathcal O_A$,
where $\mathcal O_A$ is the integral closure\footnote
{If $A$ is of the form $A=\prod A_i$ where $A_i/\Q_3$ are field extensions,
then $\mathcal O_A=\prod {\mathcal O_{A_i}}$
where each $\mathcal O_{A_i}$ is the integer ring of $A_i$.}
of $\Z_3$ in $A$.
This $\Phi_A$ factors through $V(\Z)\rightarrow V(\Z/27\Z)$.

Let $r$ and $q$ be squarefree integers satisfying $(q,r)=(qr,3)=1$.
We put $\Phi_q=\prod_{p\mid q}\Phi_p$ and $\Psi_r=\prod_{p\mid r}\Psi_p$, and
define the zeta functions
\begin{equation}
\xi_{r,q}^\pm(s)
:=\sum_{\substack{x\in\gl_2(\Z)\backslash V(\Z) \\ \pm\Disc(x) > 0}}
\Phi_A(x)\Psi_r(x)\Phi_q(x)\frac{|{\rm Stab}(x)|^{-1}}{|\Disc(x)|^s}.
\end{equation}
As in \cite{scc}, Theorem \ref{thm:exponent}
follows from uniform estimates for the zeta functions
$\widehat{\xi_{r,q}}{}^{\pm}(s)$ which are dual to $\xi_{r,q}^\pm(s)$.

Let $V^\ast = \Hom(V(\Z), \Z)$ be the dual space of $V$.
The (finite) Fourier transform of $\Psi_r$, a function
of $y\in V^\ast(\Z/r^2\Z)$, is defined by
\begin{equation}
\widehat{\Psi_r}(y):=\frac{1}{r^8}\sum_{x\in V(\Z/r^2\Z)}
\Psi_r(x)\exp\left(2\pi\sqrt{-1}\cdot\frac{[x,y]}{r^2}\right),
\qquad
y\in V^\ast(\Z/r^2\Z),
\end{equation}
where
$[x, y] = x_1 y_1 + x_2 y_2 + x_3 y_3 + x_4 y_4
$
(using the coordinate system on $V^*(\Z)$ induced by the canonical pairing),
and we define $\widehat{\Phi_A}$ and $\widehat{\Phi_q}$ similarly.
Then the dual zeta function is defined by
\begin{equation}
\widehat{\xi_{r,q}}{}^{\pm}(s)
:=\sum_{\substack{y\in\gl_2(\Z)\backslash V^\ast(\Z)}}
\widehat{\Phi_A}(y)\widehat{\Psi_r}(y)\widehat{\Phi_q}(y)
\frac{|{\rm Stab}(y)|^{-1}}{|P^\ast(y)/3^{12}r^8q^8|^s}.
\end{equation}
The `dual discriminant' $P^{\ast}(y)$ is the same as the `ordinary'
discriminant $P(x)$ or $\Disc(x)$ on $V(\Z)$ apart from some $3$-adic factors; 
we refer to Section 2 of \cite{lbc}
for details. 

Because of the functional equation
\begin{equation}
\begin{pmatrix}
\xi_{r,q}^+(1-s)\\
\xi_{r,q}^-(1-s)\\
\end{pmatrix}
=
\frac{3^{6s-2}}{2\pi^{4s}}
\Gamma(s)^2\Gamma\left(s-\tfrac16\right)\Gamma\left(s+\tfrac16\right)
\begin{pmatrix}
\sin 2\pi s&\sin \pi s\\
3\sin \pi s&\sin2\pi s\\
\end{pmatrix}
\begin{pmatrix}
\widehat{\xi_{r,q}}{}^+(s)\\
\widehat{\xi_{r,q}}{}^-(s)\\
\end{pmatrix},
\end{equation}
the estimate of the $O$-term in \eqref{eq:cubic_density}
is reduced to estimates for these dual zeta functions
$\widehat{\xi_{r,q}}{}^{\pm}(s)$ which are uniform
with respect to $r$ and $q$.
We write
\begin{equation}
\widehat{\xi_{r,q}}^\pm(s)
:=\sum_{\mu_n}\frac{c_{r,q}^\pm(\mu_n)}{\mu_n^s},
\end{equation}
the sum being over $\mu_n\in\frac{1}{3^{12}r^8q^8}\Z$.
We fix a choice of sign and drop $\pm$ from our notation.
The following bound essentially
follows from Theorem 4.1 in \cite{scc}.
\begin{prop}
For any fixed $\epsilon>0$,
we have the bounds
\begin{align}
\sum_{\mu_n<X}|c_{r,q}(\mu_n)|&\ll
(rq)^{2+\epsilon}X,\label{eq:coeff_bound_1}\\
\sum_{\mu_n<X}|c_{r,q}(\mu_n)|&\ll
(rq)^{1+\epsilon}X+(rq)^{-1+\epsilon},\label{eq:coeff_bound_2}
\end{align}
uniformly for all $r,q$ and $X$.
\end{prop}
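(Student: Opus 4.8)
The plan is to read off the left-hand sides as weighted counts of $\gl_2(\Z)$-orbits in the dual lattice, reduce to the single-level estimate of \cite[Theorem 4.1]{scc}, and feed in the explicit Fourier transforms from \cite{lbc}; the genuinely new content is tracking the dependence on $r$ and $q$. First I would unfold the definition of $\widehat{\xi_{r,q}}{}^{\pm}(s)$ and bound, by the triangle inequality,
\[
\sum_{\mu_n<X}|c_{r,q}(\mu_n)|\;\le\!\!\sum_{\substack{y\in\gl_2(\Z)\backslash V^\ast(\Z)\\ |P^\ast(y)|<3^{12}r^8q^8X}}\!\!\bigl|\widehat{\Phi_A}(y)\,\widehat{\Psi_r}(y)\,\widehat{\Phi_q}(y)\bigr|\,|{\rm Stab}(y)|^{-1}.
\]
Since $(q,r)=(qr,3)=1$ and $\Psi_r=\prod_{p\mid r}\Psi_p$, $\Phi_q=\prod_{p\mid q}\Phi_p$ factor through the pairwise coprime moduli $r^2$, $q^2$ and $27$, the Chinese Remainder Theorem yields the multiplicative factorizations $\widehat{\Psi_r}=\prod_{p\mid r}\widehat{\Psi_p}$ and $\widehat{\Phi_q}=\prod_{p\mid q}\widehat{\Phi_p}$, so the weight above is a product of local factors indexed by $y\bmod 27$ and by $y\bmod p^2$ for $p\mid rq$.

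Next I would stratify $V^\ast(\Z)$ by the reduction of $y$ modulo $M:=27\,r^2q^2$ and apply \cite[Theorem 4.1]{scc}, which controls the weighted count of orbits with $|P^\ast(y)|<Y$ in each residue class, together with the explicit values of $\widehat{\Psi_p}$ and $\widehat{\Phi_p}$ given by the computation of \cite{lbc}. The two arithmetic features I would exploit are that each local transform is concentrated on a thin set modulo $p^2$ and is bounded in absolute value by its value at $0$, which is the relevant local density. Because the discriminant cutoff $Y=3^{12}r^8q^8X$ scales exactly as $M^4$, each residue class contributes $\approx (\text{const})\cdot X$ to the main term of the lattice count, so summing against the weight gives an $X$-linear contribution whose coefficient factors as $\prod_{p\mid r}\|\widehat{\Psi_p}\|_1\prod_{p\mid q}\|\widehat{\Phi_p}\|_1$ (times a bounded $3$-adic factor), where $\|\cdot\|_1$ is the sum of absolute values over $V^\ast(\Z/p^2\Z)$. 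A second, $X$-independent piece arises from the lower-order term of \cite[Theorem 4.1]{scc} and from the sparse set of low-lying dual orbits.

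The two displayed bounds then correspond to two ways of estimating these local $\ell^1$-norms. Using only the crude bound $\|\widehat{\Psi_p}\|_1,\|\widehat{\Phi_p}\|_1\ll p^{2}$, and the squarefreeness of $r,q$ to turn the product into $(rq)^{2}$, gives \eqref{eq:coeff_bound_1} uniformly in $X$, in particular in the range of very small $X$ where only a handful of orbits contribute. Using instead the cancellation exhibited by the formulas of \cite{lbc}, one improves each local factor to $\ll p^{1+\epsilon}$, whence the $X$-coefficient is $\ll(rq)^{1+\epsilon}$; meanwhile the contribution of the few small-discriminant orbits—whose weights are largest, being bounded by $\prod\widehat{\cdot}(0)$, but which are $O((rq)^\epsilon)$ in number—is $X$-independent and $\ll(rq)^{-1+\epsilon}$, giving \eqref{eq:coeff_bound_2}. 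Throughout, the number of prime divisors of $rq$ is absorbed into $(rq)^\epsilon$ via the divisor bound.

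The hard part will be uniformity in $r$ and $q$. One must apply \cite[Theorem 4.1]{scc} with error terms that do not deteriorate as $M$ grows, and one must control the product over $p\mid rq$ of the local factors so that it costs only a fixed power of $rq$ rather than a constant growing exponentially in $\omega(rq)$. This is precisely what the explicit local computations of \cite{lbc}—pinning down the support and size of each $\widehat{\Psi_p}$ and $\widehat{\Phi_p}$—are needed for, and keeping the dependence of the boundary term of the lattice-point count linear, rather than exponential, in the relevant norms is the delicate point.
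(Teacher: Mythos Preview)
Your approach and the paper's agree in broad outline---both feed the explicit Fourier transforms of \cite{lbc} into the machinery of \cite[Theorem~4.1]{scc}---but the paper takes a shortcut that you do not. Rather than stratify by residue classes modulo $27r^2q^2$ and rebuild the argument of Theorem~4.1 with the new weight $\widehat{\Phi_A}\widehat{\Psi_r}\widehat{\Phi_q}$, the paper observes that the explicit formulas of \cite{lbc} yield the pointwise domination
\[
|\widehat{\Psi_p}(y)|\;\le\;(1+p^{-2})\,\Phi_p^\ast(y),\qquad |\widehat{\Phi_p}(y)|\;\le\;\Phi_p^\ast(y),
\]
where $\Phi_p^\ast$ agrees with $|\widehat{\Phi_p}|$ except on the single orbit type $(1^3_{\rm max})$. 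Since $\prod_p(1+p^{-2})$ converges and $|\widehat{\Phi_A}|\le 1$, this bounds $\sum_{\mu_n<X}|c_{r,q}(\mu_n)|$ coefficientwise by (essentially) the quantity $\sum_{\mu_n<X}b_{rq}(\mu_n)$ already treated in \cite[Theorem~4.1]{scc}, with the modulus there replaced by the single squarefree integer $rq$. Both \eqref{eq:coeff_bound_1} and \eqref{eq:coeff_bound_2} then follow by direct citation, up to a minor modification of the proof to accommodate the altered value of $\Phi_p^\ast$ on type $(1^3_{\rm max})$. The point is that $r$ and $q$ never need to be tracked separately: the domination collapses $\widehat{\Psi_r}\widehat{\Phi_q}$ to a single object of the kind already analyzed.

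Two remarks on your route. First, Theorem~4.1 of \cite{scc} does not, as you write, ``control the weighted count of orbits \dots\ in each residue class''; it bounds the $|\widehat{\Phi_q}|$-weighted sum over all orbits directly. Your stratification plan therefore amounts to reproving that theorem with the weight $|\widehat{\Psi_r}\widehat{\Phi_q}|$, which is feasible via the same orbit-type decomposition but is more work than the domination trick. Second, the $\ell^1$-norm heuristic you sketch is suggestive but not quite the right bookkeeping device: what drives the two exponents in \eqref{eq:coeff_bound_1} and \eqref{eq:coeff_bound_2} is the interplay between the size of $\widehat{\Phi_p}$ on each orbit type and the number of dual orbits of that type with bounded $|P^\ast|$, rather than the raw $\ell^1$-norm over $V^\ast(\Z/p^2\Z)$. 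Your approach would ultimately arrive at the same place, but the paper's domination makes the uniformity in $r,q$ that you flag as ``the hard part'' essentially automatic.
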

\begin{proof}
In \cite{lbc}, we gave explicit formulas for the Fourier
transforms $(\Phi_p-\Psi_p)^\wedge=\widehat{\Phi_p}-\widehat{\Psi_p}$
and $\widehat{\Phi_p}$ in Theorems 6.3 and 6.4, respectively, so
a formula for $\widehat{\Psi_p}$ follows by linearity.
We introduce a function $\Phi^\ast_p$ on $V^\ast(\Z/p^2\Z)$ by
\begin{equation}\label{eq:Phi^ast}
\Phi^\ast_p(b)=
	\begin{cases}
		p^{-5}	& \text{if $b$ is of type $(1^3_{\rm max})$},\\
		|\widehat{\Phi_p}(b)| & \text{otherwise}.
	\end{cases}
\end{equation}
Then we have $|\widehat{\Phi_p}|\leq \Phi^\ast_p$ and
$|\widehat{\Psi_p}|\leq (1+p^{-2})\Phi^\ast_p$.
Let $c=\prod_p(1+p^{-2})$.
Note the trivial bound $|\widehat{\Phi_A}|\leq1$.
Therefore $\widehat{\xi_{r,q}}{}^\pm(s)$ is bounded coefficientwise by
\begin{equation}\label{eq:bounded-coefficientwise}
\sum_{\substack{y\in\gl_2(\Z)\backslash V^\ast(\Z)}} c
\Phi^\ast_{rq}(y)\frac{|{\rm Stab}(y)|^{-1}}{|P^\ast(y)/3^{12}r^8q^8|^s}.
\end{equation}
Here $\Phi^\ast_{rq}=\prod_{p\mid rq}\Phi^\ast_p$.
If $\Phi^\ast_{rq}(y)$ in above were replaced with $|\widehat{\Phi_{rq}}(y)|$,
then \eqref{eq:bounded-coefficientwise} is,
in the notation of Section 4 in \cite{scc}, given by
\begin{equation}\label{eq:bounded-vertual}
c\sum_{\mu_n}\frac{b_{rq}(\mu_n)}{(\mu_n/3^{12})^s}.
\end{equation}
So the bounds of this proposition
follow from Theorem 4.1 in \cite{scc}.
Our actual \eqref{eq:bounded-coefficientwise} is slightly different
from \eqref{eq:bounded-vertual} because of \eqref{eq:Phi^ast}, but
we can nevertheless easily modify the proof of Theorem 4.1 in \cite{scc}
for our case and obtain the same estimate. We omit the detail.
\end{proof}
Similarly to Proposition 4.2 in \cite{scc},
we have the following corollary.
\begin{prop}\label{prop:Dirichlet_bound}
Let $z\geq r^{-2}q^{-2}$.
For a fixed $0<\delta<1$ (and $\epsilon>0$), we have the bounds
\begin{align}
\sum_{\mu_n<z}|c_{r,q}(\mu_n)|/\mu_n^\delta
\ll (rq)^{3\delta-1+\epsilon}+(rq)^{1+\epsilon}z^{1-\delta}.
\end{align}
We also have, for any fixed $\delta>1$,
\begin{equation}
\sum_{\mu_n>z}|c_{r,q}(\mu_n)|/\mu_n^\delta
\ll(rq)^{1+\epsilon}z^{1-\delta}.
\end{equation}
\end{prop}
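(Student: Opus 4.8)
The plan is to derive both estimates from the coefficient bounds \eqref{eq:coeff_bound_1} and \eqref{eq:coeff_bound_2} of the previous proposition by partial summation, in the manner of Proposition 4.2 of \cite{scc}. Set $S(t):=\sum_{\mu_n<t}|c_{r,q}(\mu_n)|$, and recall that the $\mu_n$ lie in the discrete set $\frac{1}{3^{12}r^8q^8}\Z$, so that $S(t)=0$ for $t$ below the least positive $\mu_n$. Abel summation then yields
\begin{equation*}
\sum_{\mu_n<z}\frac{|c_{r,q}(\mu_n)|}{\mu_n^\delta}
=\frac{S(z)}{z^\delta}+\delta\int_0^z\frac{S(t)}{t^{\delta+1}}\,dt,
\end{equation*}
the lower boundary term vanishing, and the integral converging at $0$ when $0<\delta<1$.

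The key observation is that the two bounds $S(t)\ll(rq)^{2+\epsilon}t$ and $S(t)\ll(rq)^{1+\epsilon}t+(rq)^{-1+\epsilon}$ are each superior in a different range, meeting at the crossover $t_0:=(rq)^{-3}$: \eqref{eq:coeff_bound_1} wins for $t<t_0$ and \eqref{eq:coeff_bound_2} for $t>t_0$. Since the hypothesis $z\geq r^{-2}q^{-2}$ guarantees $z>t_0$, I would split the integral at $t_0$, use \eqref{eq:coeff_bound_1} on $[0,t_0]$ and \eqref{eq:coeff_bound_2} on $[t_0,z]$, and evaluate the resulting elementary integrals of powers of $t$. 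On $[0,t_0]$ one gets $(rq)^2\int_0^{t_0}t^{-\delta}\,dt\ll(rq)^2t_0^{1-\delta}=(rq)^{3\delta-1}$; the linear part of \eqref{eq:coeff_bound_2} on $[t_0,z]$ gives $(rq)\int_{t_0}^z t^{-\delta}\,dt\ll(rq)z^{1-\delta}$, and its constant part gives $(rq)^{-1}\int_{t_0}^z t^{-\delta-1}\,dt\ll(rq)^{-1}t_0^{-\delta}=(rq)^{3\delta-1}$. The boundary term contributes $S(z)/z^\delta\ll(rq)z^{1-\delta}+(rq)^{-1}z^{-\delta}$. Summing these proves the first inequality, provided the stray term $(rq)^{-1}z^{-\delta}$ is controlled.

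For the tail estimate with $\delta>1$ I would argue similarly, writing
\begin{equation*}
\sum_{\mu_n>z}\frac{|c_{r,q}(\mu_n)|}{\mu_n^\delta}
=-\frac{S(z)}{z^\delta}+\delta\int_z^\infty\frac{S(t)}{t^{\delta+1}}\,dt,
\end{equation*}
where the boundary contribution at infinity vanishes because $\delta>1$, and the boundary term at $z$ is nonpositive and may be discarded for an upper bound. Here only \eqref{eq:coeff_bound_2} is needed, and its linear and constant parts integrate convergently to $(rq)z^{1-\delta}$ and $(rq)^{-1}z^{-\delta}$ respectively, so once more everything reduces to controlling the single term $(rq)^{-1}z^{-\delta}$.

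The one point requiring genuine care — and the main obstacle to a clean statement — is exactly this term $(rq)^{-1}z^{-\delta}$, which recurs in every estimate. The hypothesis $z\geq r^{-2}q^{-2}$ is precisely what is needed to absorb it: it gives $(rq)^{-1}z^{-1}\leq(rq)$, hence $(rq)^{-1}z^{-\delta}=(rq)^{-1}z^{-1}\cdot z^{1-\delta}\leq(rq)z^{1-\delta}$, so this term is always dominated by the claimed right-hand sides. Everything else is a routine evaluation of powers of $t$, and uniformity in $r$, $q$, and $z$ is inherited directly from the uniformity of \eqref{eq:coeff_bound_1} and \eqref{eq:coeff_bound_2}.
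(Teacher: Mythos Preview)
Your argument is correct and is precisely the partial summation from \eqref{eq:coeff_bound_1} and \eqref{eq:coeff_bound_2} that the paper intends when it says ``Similarly to Proposition 4.2 in \cite{scc}.'' Your identification of the crossover $t_0=(rq)^{-3}$, the split of the integral there, and the absorption of the stray $(rq)^{-1}z^{-\delta}$ via the hypothesis $z\geq(rq)^{-2}$ are exactly the points that need checking, and you handle them cleanly.
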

We are ready to prove Theorem \ref{thm:exponent}.
\begin{proof}[Proof of Theorem \ref{thm:exponent}]
From exactly the same argument as of Section 5.3 in \cite{scc},
the difference of the counting functions and the corresponding
two main terms in \eqref{eq:cubic_density} are,
for any parameter $Q\leq X$ and $y\geq X^{3/5}$, bounded by
\begin{equation}\label{eq:difference-bound}
\ll\sum_{q<Q}E_q(r,y,X)+y^{1+\epsilon}+X/Q^{1-\epsilon},
\end{equation}
where
\begin{equation}
E_q(r,y,X)
=X^{3/8}\sum_{\mu_n<z_q}\frac{|c_{r,q}(\mu_n)|}{\mu_n^{5/8}}
+X^{3/8}\left(\frac{X^3}{y^4}\right)^{\rho/4}\sum_{\mu_n\geq z_q}\frac{|c_{r,q}(\mu_n)|}{\mu_n^{5/8+\rho/4}}.
\end{equation}
Here $\rho\geq3$ is a positive integer and
$z_q$ is another parameter which we can choose freely for each $q$.
By Proposition \ref{prop:Dirichlet_bound}, for $z_q\geq r^{-2}q^{-2}$,
\begin{equation}\label{eq:bound_E_q_z}
E_q(r,y,X)
\ll X^{3/8}r^{7/8+\epsilon}q^{7/8+\epsilon}
+X^{3/8}r^{1+\epsilon}q^{1+\epsilon}z_q^{3/8}
+X^{3/8}r^{1+\epsilon}q^{1+\epsilon}z_q^{3/8}
\left(\frac{X^3}{y^4z_q}\right)^{\rho/4}.
\end{equation}
For $q$ satisfying $X^3/y^4\geq r^{-2}q^{-2}$,
we choose $z_q=X^3/y^4$ and get the bound
\begin{equation}\label{eq:bound_E_q}
E_q(r,y,X)
\ll
X^{3/8}r^{7/8+\epsilon}q^{7/8+\epsilon}
+X^{3/2}r^{1+\epsilon}q^{1+\epsilon}y^{-3/2}.
\end{equation}
If $X^3/y^4\leq r^{-2}q^{-2}$, we choose $z_q=r^{-2}q^{-2}$.
Then $\frac{X^3}{y^4 z_q} \leq 1$, and so the latter two terms in the right hand side of \eqref{eq:bound_E_q_z}
are bounded by the first, so that \eqref{eq:bound_E_q} holds for such $q$ as well.
Hence \eqref{eq:difference-bound} is
\begin{equation}
\ll X^{3/8}r^{7/8+\epsilon}Q^{15/8+\epsilon}+X^{3/2}r^{1+\epsilon}Q^{2+\epsilon}y^{-3/2}+y^{1+\epsilon}+X/Q^{1-\epsilon}.
\end{equation}
Our theorem follows by choosing $y=X/Q$ and $Q=X^{5/23}r^{-7/23}$.
\end{proof}

\section{Remarks}\label{sec_remarks}
We give some remarks.
First, we counted $S_3$-sextic fields $\widetilde K$
with specifying the $3$-adic completion $A$ of $K$,
and by the same method we may specify
any finite number of local completions of $K$.
In particular for a fixed prime $p\neq3$, the ratio
of the contributions of $S_3$-sextic fields
whose splitting type of $p$ is
$(111111)$, $(222)$, $(33)$, $(1^21^21^2)$ and $(1^31^3)$
for the first and second main terms of \eqref{eq:strong}
are respectively given by
\[
\frac16:\frac12:\frac13:\frac1p:\frac1{p^{4/3}}
\quad
\text{and}
\quad
\frac{1+\frac{2}{p^{1/3}}+\frac{1}{p^{2/3}}}{6}
:\frac{1+\frac{1}{p^{2/3}}}{2}
:\frac{1-\frac{1}{p^{1/3}}+\frac{1}{p^{2/3}}}{3}
:\frac{1+\frac{1}{p^{1/3}}}{p}
:\frac{1}{p^{13/9}}.
\]

For $p = 3$ the last term should be replaced by
$3^{-5/3} + 2 \cdot 3^{-7/3}$ and $3^{-17/9} + 2 \cdot 3^{-22/9}$ respectively. For the splitting types
$(1^2 1^2 1^2)$ and $(1^3 1^3)$ there are often multiple possibilities for $K \otimes \Q_p$, depending on $p$,
and the terms above can be further subdivided following the tables
in Section 6.2 of \cite{scc}. Note that for any $p$ the sum of the first three
entries (corresponding to fields unramified at $p$) is $1$ and $1 + p^{-2/3}$ respectively.

All of this also follows from the methods of \cite{befo} or \cite{bhwo}. In our case the error term
remains the same, except that now it also depends (polynomially) on the prime(s) $p$.

In addition, by the same method, we can prove
the analogue of the power-saving remainder term
\eqref{eq:weak} for relative $S_3$-extensions over an arbitrary base number field $F$.
This would use the generalization of \eqref{eq:cubic_density}
over $F$, whose proof will appear elsewhere.
The exponent of $X$ in the $O$-term depends (only) on the degree $[F:\Q]$.

\section{Numerical experiments}\label{sec_data}
Finally, we compared our result \eqref{eq:strong} for $N^{\pm}_6(X; S_3)$ to numerical data.
Our data
weakly confirms \eqref{eq:strong}, but it suggests the presence of one or more additional secondary terms. Indeed, our data
will demonstrate several curious phenomena for which we don't have a satisfactory explanation.

We computed tables of $N_6^{\pm}(X; S_3)$ using two distinct methods:
\begin{itemize}
\item
We began with a direct approach, which allowed us to
tabulate $N_6^{\pm}(X; S_3)$ for $X \leq 5 \cdot 10^{18}$.
We used Belabas's \url{cubic} program
\cite{bel_cubic} to generate a list of all cubic fields $K$ with $|\Disc(K)| < (5/3)^{1/2} 10^9$, including generating 
polynomials. We have $\Disc(\widetilde{K}) = \Disc(K)^2 |\Disc(F)|$, where $F$ is the quadratic resolvent of $K$,
and as $|\Disc(F)| \geq 3$ we were able to tabulate $S_3$-fields with discriminant bounded by $5 \cdot 10^{18}$.

We used Lemma \ref{lem:relation_disc} to compute 
$\Disc(\widetilde{K})$ in terms of $\Disc(K)$. In particular, $\Disc(\widetilde{K})$ is determined by $\Disc(K)$ apart from the power of
2, which depends on whether or not $K$ is totally ramified at 2. For the power of 2,
Belabas's program 
outputs a binary cubic form $f = a u^3 + b u^2 v + c u v^2 + d v^3$ which corresponds to the maximal
order $\calO_K$, and 2 is totally ramified in $K$ if and only if $f$ has a triple root $(\textnormal{mod} \ 2)$, i.e., if
\begin{equation}
(a, b, c, d) \ (\textnormal{mod} \ 2) \in \{ (1, 1, 1, 1), (1, 0, 0, 0), (0, 0, 0, 1) \}.
\end{equation}
We used this condition to check the ramification at 2 and therefore to compile our list of $S_3$-sextic extensions.

This approach is somewhat inefficient: we also obtained many fields $\widetilde{K}$ with larger discriminant, 
which we had no choice but to discard.
\item
Recently, Cohen and the second author \cite{CT3} proved an explicit formula
enumerating cubic fields by their quadratic resolvents. As the referee (of the present
paper) suggested, the approach of \cite{CT3} is ideal for counting
$N_6^{\pm}(X; S_3)$, 
and so
we computed $N_6^{\pm}(X; S_3)$ for $X \leq 10^{23}$; we refer to \cite{CT3} for details.
\end{itemize}
Code implementing each of these algorithms, in Java and in PARI/GP \cite{pari} respectively, is available from the second
author's website; to reproduce our results using either program, the reader must also
download and run  \url{cubic}. (Belabas informs us that this functionality may be incorporated into a future version
of PARI/GP; this has the potential to make computations beyond $X = 10^{23}$ practical.)

As we will see, our data is a rather odd match to our theoretical
investigations, and the reader might be forgiven for speculating that our data is in error. To that end we note
that implementing redundant algorithms for $X \leq 5 \cdot 10^{18}$ 
allowed us to double check our results.

This brings us to the actual data, which we quote from \cite{CT3}. The tables below list $N_6^{\pm}(X; S_3)$ for
various $X$ between $10^{12}$ and $3 \cdot 10^{23}$. The columns labeled \eqref{eq:strong} give the values
predicted by \eqref{eq:strong}, which are consistently too high. (The bare main terms of Theorem \ref{thm:maintheorem}
are still higher.)
\\
\\
\begin{center}
\begin{tabular}{c | c | cccc}
$X$ & $N_6^+(X; S_3)$ & \eqref{eq:strong} & \eqref{eq:stronger} & Error \\ \hline
$10^{12}$ & 690 & 756 & 709 & .031\\
$10^{13}$ & 1650 & 1762 &1682 & .027\\ 
$10^{14}$& 3848 & 4045&3910 & .025\\
$10^{15}$& 8867 & 9181 &8955  & .021\\
$10^{16}$& 20062 & 20658 & 20276 & .021\\
$10^{17}$& 45054 & 46159 & 45513 & .021\\
$10^{18}$& 100335 & 102555 & 101460 & .022\\
$10^{19}$& 222939 & 226801 & 224943  & .020\\
$10^{20}$& 492335 & 499647 & 496490  & .020\\
$10^{21}$& 1083761 & 1097214 & 1091842  & .020\\
$10^{22}$& 2378358 & 2402995 & 2393842  & .019\\
$10^{23}$& 5207310 & 5250840 & 5235221  & .018\\
- & - & - & - & -\\

\end{tabular}
\ \
\begin{tabular}{c | c | ccc}
$X$ &$N_6^-(X; S_3)$ &  \eqref{eq:strong} & \eqref{eq:stronger} & Error \\ \hline
$10^{12}$ & 2809 & 2979 & 2828 & .079\\
$10^{13}$ & 6315 & 6613 & 6362 & .073\\ 
$10^{14}$& 14121 & 14617 & 14199 & .064\\
$10^{15}$& 31276 & 32192 & 31492 & .062 \\
$10^{16}$& 68972 & 70683 & 69507 & .061 \\
$10^{17}$& 151877 & 154800 & 152820 & .055\\
$10^{18}$& 333398 & 338279 & 334938 & .049 \\
$10^{19}$& 729572 & 737847 & 732195  & .044\\
$10^{20}$& 1592941 & 1606792 & 1597213  & .039\\
$10^{21}$& 3470007 & 3494240 & 3477974  & .036\\
$10^{22}$& 7550171 & 7589746 & 7562074  & .031\\
$10^{23}$& 16399890 & 16468453 & 16421298  & .028\\
$3 \cdot 10^{23}$& 23738460 & 23824734 & 23763890  & .026\\

\end{tabular}
\end{center}

\begin{thebibliography}{1}

\bibitem{bel_cubic} K. Belabas,
\emph{A fast algorithm to compute cubic fields},
Math. Comp. \textbf{66} (1997), no. 219, 1213--1237; accompanying software available at
\url{http://www.math.u-bordeaux1.fr/~belabas/research/software/cubic-1.2.tgz}.

\bibitem{befo}
K.~Belabas and E.~Fouvry.
\newblock Discriminants cubiques et progressions arithm\'etiques,.
\newblock {\em Int. J. of Number Theory}, \textbf{6} (2010), 1491--1529.

\bibitem{bhwo}
M.~Bhargava and M.~Wood.
\newblock The density of discriminants of ${S}_3$-sextic number fields.
\newblock {\em Proc. Amer. Math. Soc.}, \textbf{136} (2008), 1581--1587.

\bibitem{coh}
H.~Cohen,
\newblock Advanced topics in computational number theory,
\newblock Springer-Verlag, New York, 1999.


\bibitem{CT3}
H.~Cohen and F.~Thorne,
\emph{Dirichlet series associated to cubic fields with given resolvent}, submitted.

\bibitem{DW2} B. Datskovsky and D. Wright,
\emph{The adelic zeta function associated to the space of binary cubic forms. II. Local theory},
J. Reine Angew. Math. \textbf{367}  (1986), 27--75.

\bibitem{DH} H. Davenport and H. Heilbronn,
\emph{On the density of discriminants of cubic fields. II}, 
Proc. Roy. Soc. London Ser. A \textbf{322}  (1971), no. 1551, 405--420. 

\bibitem{pari} 
PARI/GP, version {\tt 2.5.1}, Bordeaux, 2012, available from \url{http://pari.math.u-bordeaux.fr/}.

\bibitem{shintani} T. Shintani,
\emph{On Dirichlet series whose coefficients are class numbers of integral binary cubic forms},
J. Math. Soc. Japan  \textbf{24} (1972), 132--188. 

\bibitem{lbc}
T.~Taniguchi and F.~Thorne,
\emph{Orbital $L$-functions for the space of binary cubic forms},
Canad. Math. J., to appear.

\bibitem{scc}
T.~Taniguchi and F.~Thorne,
\emph{Secondary terms in counting functions for cubic fields},
Duke Math. J., to appear.

\end{thebibliography}
To explain the apparent discrepancy between the data and \eqref{eq:strong},
we tried an amended heuristic. If $|\Disc(\widetilde{K})| < X$, then $K$ cannot
be totally ramified at any prime $> X^{1/4}$. This suggests multiplying the main term
by a factor
\begin{equation}\label{eqn_mod1}
\prod_{p > X^{1/4}} \frac{1 + p^{-1}}{1 + p^{-1} + p^{-4/3}}
\sim
1 - \sum_{p > X^{1/4}} p^{-4/3}
\sim 
1 - \int_{X^{1/4}}^{\infty} \frac{t^{-4/3}}{\log t} dt
\sim 1 - \frac{12 X^{-1/12}}{\log X}.
\end{equation}
(The approximations above are rather simple, so we verified numerically
that improving any of them leads only to minor
differences.)
Similarly, for the secondary term we incorporate the correction term
\begin{equation}\label{eqn_mod2}
\prod_{p > X^{1/4}} \frac{1 + p^{-2/3}+ p^{-1} + p^{-4/3}}
{1 + p^{-2/3}+ p^{-1} + p^{-4/3} + p^{-13/9}}
\sim 1 - \sum_{p > X^{1/4}} p^{-13/9} + O(p^{-19/9}) \sim 1 - 9 \frac{X^{-1/9}}{\log X}.
\end{equation}
This suggests the asymptotic formula
\begin{equation}\label{eq:stronger}
N_6^\pm(X;S_3)
\sim
\frac{C^\pm}{12}\prod_p c_p\cdot X^{1/3} \bigg(1 - \frac{12 X^{-1/12}}{ \log X} \bigg)
+\frac{4K^\pm\zeta(1/3)}{5\Gamma(2/3)^3}\prod_p k_p\cdot X^{5/18} \bigg(1 - 9 \frac{X^{-1/9}}{\log X} \bigg).
\end{equation}

With these corrections, we obtained the values listed under \eqref{eq:stronger} in our tables.
These values are more accurate, but still do not seem to closely match the data.

The final column labeled `Error' gives the relative error estimate $\frac{\eqref{eq:strong} - N_6^{\pm}(X, S_3)}{X^{5/18}}$. This column suggests that the secondary term in \eqref{eq:strong} is likely to be relevant,
but the evidence is not overwhelming. Our heuristics also do not explain why the relative error is larger for
negative discriminants, but (apparently) converges faster.

We tried other variations of our heuristics as well. As described earlier, we experimented with 
improving the estimates in \eqref{eqn_mod1} and \eqref{eqn_mod2}
(e.g. evaluating the integrals in \eqref{eqn_mod1} and \eqref{eqn_mod2} numerically instead
of using the approximation $\log t \sim \log X$ and evaluting them). This made only a very minor difference, and it adjusted
our counts upward rather than downward. Also, we observed that in fact no prime
larger than $\frac{X}{\sqrt[4]{3}}$ can totally ramify (as an $S_3$-sextic field has a nontrivial
quadratic resolvent), and we tried an accordingly modified version of \eqref{eqn_mod1} and \eqref{eqn_mod2}. 
These modified heuristics still produced data which were too high.

\subsection*{Arithmetic progressions}
Our work in \cite{scc} found and explained interesting discrepancies in the distribution of cubic field
discriminants in arithmetic progressions. For example, the following table lists 
the number 
of cubic fields $K$ with $0 < \Disc(K) < 2 \cdot 10^6$ and $\Disc(K) \equiv a \ (\textmod \ m)$ for $m = 5$ and $7$.
The ``predicted'' row is the sum of the $X$ and $X^{5/6}$ terms of the asymtptotic formula 
proved in \cite{scc}.

\begin{center}
\begin{tabular}{l | c | c | c | c | c  }
Discriminant modulo 5 & 0 & 1 & 2 & 3 & 4  \\ \hline
Actual count & 21277 & 22887 & 22751 & 22748 & 22781 \\
Predicted & 21307 & 22757 & 22757 & 22757 & 22757 \\
\end{tabular}
\end{center}

\begin{center}
\begin{tabular}{l | c | c | c | c | c | c | c}
Discriminant modulo 7 & 0 & 1 & 2 & 3 & 4 & 5 & 6 \\ \hline
Actual count & 15330 & 17229 & 14327 & 15323 & 17027 & 18058 & 15150 \\
Predicted & 15316 & 17209 & 14277 & 15316 & 17024 &  18063 & 15131 \\
\end{tabular}
\end{center}
The results $(\textmod \ 5)$
could have been predicted by Davenport and Heilbronn \cite{DH}.
In contrast, the $X^{5/6}$
term of the asymptotic is different for every residue class $a \ (\textmod \ 7)$.
We proved this in \cite{scc}; these results are explained by the existence of
nontrivial sextic characters $(\textmod \ 7)$, a phenomenon that could
have been predicted earlier by Datskovsky and Wright \cite{DW2}.
\\
\\
We briefly investigated analogous questions for $S_3$-sextic field discriminants, and
we quickly found interesting behavior which our methods could not explain.

For example, $S_3$-sextic field discriminants seem to not be equidistributed modulo $5$!
Using the algorithm of \cite{CT3}, we computed the following
data for $S_3$-sextic fields $\widetilde{K}$ of negative discriminant (where there are no cyclic cubic fields),
unramified at $2$ and $3$ (to eliminate wild ramification), and with $0 < - \Disc(\widetilde{K}) < X$:

\begin{center}
\begin{tabular}{l | c | c | c | c | c  }
$X$ & 0 & 1 & 2 & 3 & 4  \\ \hline
$10^{16}$ & 5034 & 3974 & 4091 & 4027 & 4075\\
$10^{17}$ & 11211 & 8817 & 8967 & 8833 & 9075\\
$10^{18}$ & 24816 & 19530 & 19872 & 19395 & 19902\\
$10^{19}$ & 54582 & 42917 & 43623 & 42972 & 43615\\
$10^{20}$ & 119354 & 94222 & 95303 & 94175 & 95428\\
$10^{21}$ & 261627 & 205997 & 208080 & 205916 & 208632\\
$10^{22}$ & 570179 & 449574 & 453456 & 449432 & 454119\\
$10^{23}$ & 1243107 & 980023 & 985513 & 978812 & 986670\\
$3 \cdot 10^{23}$ & 1801227 & 1420062 & 1427778 & 1418371 & 1429022\\

$(10^{20})$ & 122687 & 96553 & 96553 &96553 &96553 \\ 
$(3 \cdot 10^{23})$  & 1824995 & 1437452 &1437452 &1437452  & 1437452 \\

\end{tabular}
\end{center}
Each entry counts the number of $\widetilde{K}$ with $\Disc(\widetilde{K}) \pmod 5$; note that the discriminants are
negative.
The last two rows are predictions from \eqref{eq:strong}, modified as described in 
Section \ref{sec_remarks} for the primes $2$, $3$, and $5$. For $p = 5$ the $0$ column is
the contribution from fields ramified at $5$; the remainder is divided into four
equal parts, as predicted by our methods above and in \cite{scc}.\footnote{In particular, the 
secondary terms of counting functions for cubic field discriminants, twisted
by nontrivial Dirichlet characters $(\textmod \ 5)$, vanish; see Section 6.4 of \cite{scc}.}

The surplus of discriminants divisible by $5$ is predicted by
Lemma \ref{lem:relation_disc}: for any cubic field $K$ totally ramified at $5$, we know that
$\Disc(\widetilde{K}) \leq \frac{1}{25} \Disc(K)^3$, and so many such fields have small discriminant. 
However, we were surprised to observe a surplus
of field discriminants $\equiv 2, 4 \ (\textmod \ 5)$.  Certainly this is not predicted by any analysis involving
the Shintani zeta function. We looked for other heuristic explanations, for example using the fact that
\begin{equation}
\Disc(\widetilde{K}) \equiv \pm (p_1 p_2 \dots p_m)^{-1} \ (\textmod \ 5),
\end{equation}
where $p_1, p_2, \cdots, p_m > 5$ are the primes ramified but not 
totally ramified in $K$, but we did not find any convincing explanation.
\\
\\
In conclusion, \eqref{eq:strong} and probably also its generalization to arithmetic progressions, appear to be correct -- 
but our experiments have uncovered additional phenomena which call for explanation.
Naturally we hope to see further work on this topic in the future!

\end{document}